\documentclass{article}

\usepackage{arxiv}

\usepackage[utf8]{inputenc} 
\usepackage[T1]{fontenc}    
\usepackage{hyperref}       
\usepackage{url}            
\usepackage{booktabs}       
\usepackage{amsfonts}       
\usepackage{nicefrac}       
\usepackage{microtype}      
\usepackage{lipsum}		
\usepackage{graphicx}
\usepackage{amssymb,amsthm,anysize,amsmath}
\usepackage{color}
\usepackage{array,epsfig,fancyheadings,rotating,longtable}
\usepackage{caption}
\usepackage{float}
\usepackage{ulem}
\usepackage{booktabs}
\usepackage{multirow}
\theoremstyle{plain}
\newtheorem{theorem}{Theorem}[section]
\newtheorem{lemma}[theorem]{Lemma}

\theoremstyle{definition}
\newtheorem{definition}[theorem]{Definition}
\newtheorem{example}[theorem]{Example}
\theoremstyle{remark}

\title{Constructing $K-$optimal designs for different Scheff\'{e} models}


\author{ {\hspace{1mm}Haosheng~Jiang}\\
	School of Economics and Statistics\\
	Guangzhou University\\
	Guangzhou, GZ 510006 \\
	\texttt{jianghaosheng1206@163.com} \\
    \And
    {\hspace{1mm}Jiali~Chen}\\
	School of Economics and Statistics\\
	Guangzhou University\\
	Guangzhou, GZ 510006 \\
	\texttt{2112064104@e.gzhu.edu.cn} \\
	\And
    {\hspace{1mm}Chongqi~Zhang}\thanks{Corresponding author.}\\
	School of Economics and Statistics\\
	Guangzhou University\\
	Guangzhou, GZ 510006 \\
	\texttt{cqzhang@gzhu.edu.cn} \\
}



\hypersetup{
pdftitle={A template for the arxiv style},
pdfsubject={q-bio.NC, q-bio.QM},
pdfauthor={Haosheng~Jiang,  Chongqi~Zhang},
pdfkeywords={First keyword, Second keyword, More},
}

\begin{document}
\maketitle

\begin{abstract}
 To avoid multicollinearity in regression analysis, \cite{YeZhou2013} proposed $K-$optimality criterion.
By far the most popular models for modeling the response of a mixture experiment are the Scheff\'{e} models. 
However, there have been no reports about constructing $K-$optimal designs for mixture models.
The paper constructs $K-$optimal designs for first-order and second-order Scheff\'{e} models.
The analytical solutions for first-order and second-order Scheff\'{e} models are obtained.
A series of numerical results and examples are given to illustrate the theory.

\end{abstract}

\keywords{Optimal design\and Condition number\and $K-$optimality criterion\and Mixture experiment\and Scheff\'{e} model}

\section{Introduction}\label{Sec1}
Mixture experiments are widely used in formulation experiments, blending experiments and marketing 
choice experiments where the goal is to determine the most preferred attribute composition of a product 
at a given price (\cite{Ragavaraoetal2011}).
The response of mixture experiment is assumed to depend only on the proportions of the ingredients in the 
mixture and not on the amount of the mixture.
In a $q$ component mixture in which $x_i$ represents the proportion of the $i$th component present 
in the mixture, these proportions are non-negative and sum to unity,
\begin{equation}\label{eq-1}
\sum_{i=1}^q x_i=1(x_i\ge 0).
\end{equation}
However constraint (\ref{eq-1}) obtains a special feature in which changes in the values of one of the components 
will lead to changes in the value of at least one of the other components.  
Then the experimental region becomes a $(q-1)-$dimensional regular simplex $S^{q-1}$ given by
\begin{center}
\begin{align*}
S^{q-1}=\left\{(x_1,x_2,\ldots,x_q)\in [0,1]^q\Big{|}\sum_{i=1}^q x_i=1,0\le x_i\le 1,i=1,\ldots,q \right\}.
\end{align*}
\end{center}

It is well known that the optimal experimental design is a useful tool for achieving maximal accuracy of 
statistical inference at minimal cost, and theoretical results concerning designs for different mixture models 
have been considered by numerous authors. \cite{GalilKiefer1977} compared with $D-$, $A-$ and $E-$optimality 
criteria of quadratic mixture model in their performance relative to these and other criteria. 
\cite{Chanetal1998} investigated $A-$optimal designs for an additive quadratic mixture model with $q\ge 3$. 
\cite{Zhangetal2005} studied $D-$ and $A-$optimal designs for an additive quadratic mixture-amount 
model. 
\cite{Zhangpeng2012} investigated the quadratic mixture canonical polynomials with spline. 
\cite{Zhangpeng2012} and \cite{Lizhang2020} constructed $D-$ and $A-$optimal designs 
for quadratic mixture canonical polynomials with spline, respectively. 
\cite{Zhangwong2013} proposed a flexible class of models for mixture experiments defined 
on $S_{*}^q=\{(z_1,\cdots,z_q)^{\prime}\in R^q|z_1+\cdots+z_q\le 1,z_i\ge 0,i=1,\cdots,q\}$, and 
investigated $D-$ and $A-$optimal designs for the models. 
\cite{Zhangetal2012} constructed two types of dual-objective optimal design for mixture experiments and 
discussed the general applicability of the design strategy to more complicated types of mixture design problems. 
Recently, \cite{Rioslin2021} proposes the order-of-addition mixture experiments, where the response depends on 
both the mixture proportions of components and their order of addition. 
\cite{Riosetal2022} studies $D-$optimal order-of-addition mixture designs by threshold accepting algorithm. 

In regression analysis, multicollinearity refers to a situation in which two or more regressions are highly 
linearly related. The effects of multicollinearity include $(i)$ the estimated regression coefficients tend to 
have large sampling variability, which implies that the estimated regression coefficients tend to vary widely 
from one sample to another; $(ii)$ the estimated individual regression coefficients may not be statistically 
significant even though a definite statistical relation exists; $(iii)$ the interpretation of regression coefficients 
is often altered. The condition number is a good measure of multicollinearity. A large condition number indicates 
severe multicollinearity (see \cite{Montgomeryetal2021}). 
To avoid multicollinearity in regression analysis, \cite{YeZhou2013} proposed $K-$optimality criterion which 
minimized the condition number of the information matrix. 
Moreover, the $K-$optimal design also minimizes the error sensitivity in the computation of the least squares 
estimator of regression parameters (\cite{Hornjohn2013}, p. 384). 
\cite{Rempelzhou2014} investigated numerical methods to compute exact $K-$optimal designs for polynomial 
regression models and linear models for factorial experiments, and implemented a simulated annealing algorithm 
to search for $K-$optimal designs on continuous design spaces. 
\cite{Yueetal2022} developed the CVX solver (\cite{Grantboyd2013}) in MATLAB to compute $K-$optimal designs for 
any regression model on a discrete design space and constructed $K-$optimal designs for polynomial, trigonometric 
and second-order response models by using the CVX. There are many results for $D-$optimal, $A-$optimal, $R-$optimal 
and other optimal designs in the literature, including theoretical and numerical results for regression models, 
design spaces and different estimators of regression coefficients. 
However, there are only a few results for $K-$optimal designs. 
This paper expands $K-$optimal design to first-order and second-order Scheff\'{e} models, and constructs $K-$optimal 
designs for the models.

The rest of the paper is organized as follows. 
Section \ref{Sec2} introduced simplex-lattice 
and simplex-centroid designs and their associated models.
The construction of $K-$optimal designs for first-order and second-order Scheff\'{e} models 
are shown in Section \ref{Sec3}. 
Section \ref{Sec3} also gives the asymptotic proporties of the total weight of vertices and midpoints of the $C(q,2)$ edges.    
A mixture experiment in formulating color photograhic dispersion is shown in Section \ref{Sec4}.
The Concluding remarks are given in Section \ref{Sec5}. 

\section{The mixture designs and their associated models}\label{Sec2}
\cite{Scheffe1958} proposed simplex-lattice design for mixture experiments and developed polynomial models.  
A polynomial model of degree $m$ in $q$ components over the simplex, the lattice, referred to as 
a $\{q,m\}-$lattice. 
Corresponding to the points in a $\{q,m\}-$lattice, the proportions used for each of 
the $q$ components have the $m+1$ equally spaced values from $0$ to $1$, that is, $x_i=0,1/m,2/m,\ldots,1$, and 
all possible mixtures with these proportions for each component are used. 
For example, the arrangement of the 
seven points of a $\{3,3\}$ simplex-lattice is presented in Figure \ref{Fig1}.
\begin{figure}[htbp]
\centering
\begin{tabular}{c}
\includegraphics[width=0.5\columnwidth]{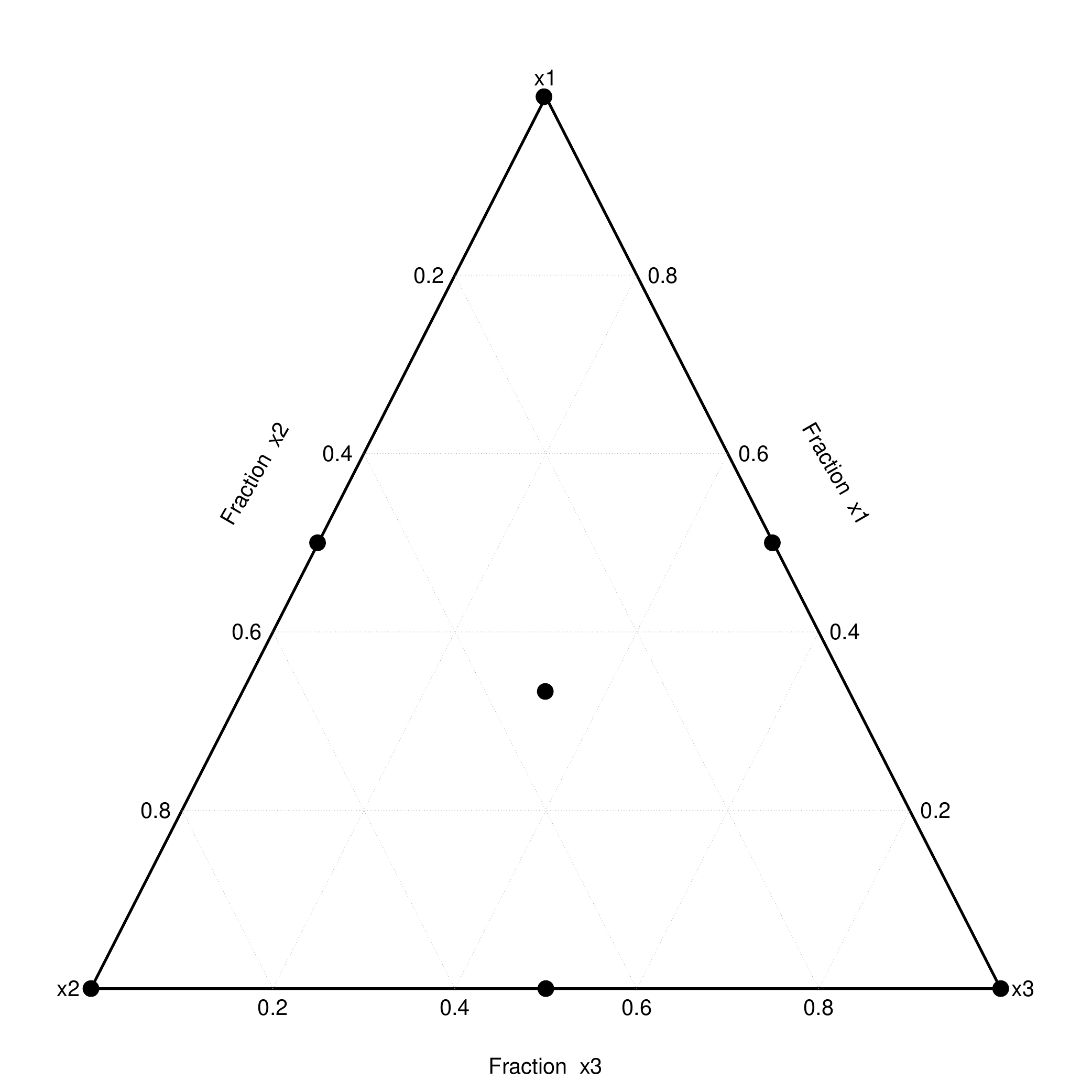}
\end{tabular}
\caption{The points of $\{3,3\}$ simplex-lattice.}\label{Fig1}
\end{figure}
The polynomial model to be fitted to data at the points of such design is 
\begin{equation}\label{eq-2}
y=\beta_0+\sum_{i=1}^q\beta_i x_i+\mathop{\sum^{q}\sum^{q}}_{i \le j}\beta_{ij}x_i x_j+\mathop{\sum^{q}\sum^{q}\sum^{q}}_{i \le j \le k}\beta_{ijk}x_i x_j x_k+\cdots+\varepsilon,
\end{equation}
where the terms up to the $m$th degree are included.
The terms in Equation (\ref{eq-2}) have meaning for us only subject to the restriction $x_1+x_2+\cdots+x_q=1$, we may make the substitution
\begin{equation}\label{eq-3}
x_q=1-\sum_{i=1}^{q-1}x_i
\end{equation}
in Equation (\ref{eq-2}), thereby removing the dependency among the $x_i$ terms, and this will not affect the 
degree of the polynomial. Since we do not wish to sacrifice information on component $q$, we do not use Equation (\ref{eq-3}).
Instead we use another approach to derive an equation in place of Equation (\ref{eq-2}) to represent the surface.
An alternative equation to Equation (\ref{eq-2}) for a polynomial of degree $m$ in $q$ components, subject to the 
restriction on the $x_i's$ in Equation (\ref{eq-1}), is derived by multiplying some of the terms in Equation (\ref{eq-2}) by identity $x_1+x_2+\cdots+x_q=1$ and simplifying.
For example, the first-degree polynomial model is given by 
\begin{equation}\label{eq-4}
y=\beta_0+\sum_{i=1}^q\beta_i x_i+\varepsilon
\end{equation}
and upon multiplying the $\beta_0$ term by $x_1+x_2+\cdots+x_q=1$, the resulting equation is 
\begin{equation}\label{eq-5}
y=\beta_0\Big{(}\sum_{i=1}^q x_i\Big{)}+\sum_{i=1}^q\beta_i x_i=\sum_{i=1}^q \beta_i^{*}x_i+\varepsilon,
\end{equation}
where $\beta_i^{*}=\beta_0+\beta_i$ for all $i=1,2,\ldots,q$.
Model (\ref{eq-5}) is also called the first-order Scheff\'{e} model.
Now that model has only the linear effect of each ingredient, such as the model (\ref{eq-4}), cannot 
capture curvature in the relationship between the ingredients and the response.
It is natural to look for models that can deal with curvature.
To achieve this end, we typically add two-factor interactions and pure quadratic effects to the model.
However, if the model includes all the linear and two-factor interaction effects for each mixture component, then 
the pure quadratic effects are redundant. To see this, note that 
\begin{equation}\label{eq-6}
x_i^2=x_i\Big{(}1-\sum_{j=1\atop j\neq i}^q x_j\Big{)}=x_i-\sum_{j=1\atop j\neq i}x_i x_j,
\end{equation}
for every ingredient proportion $x_i$. 
The second-degree polynomial model is given by 
\begin{equation*}
y=\beta_0+\sum_{i=1}^q \beta_i x_i+\mathop{\sum^{q}\sum^{q}}_{i\le j}\beta_{ij}x_i x_j+\varepsilon,
\end{equation*}
and upon multiplying the $\beta_0$ term by $x_1+x_2+\cdots+x_q=1$ and Equation (\ref{eq-6}), the resulting equation is  
\begin{align}\label{eq-7}
y&=\beta_0\Big{(}\sum_{i=1}^q x_i\Big{)}+\sum_{i=1}^q\beta_i x_i+\mathop{\sum^{q}\sum^{q}}_{i<j}\beta_{ij}x_i x_j+\sum_{i=1}^q \beta_{ii}\Big{(}x_i-\sum_{j=1\atop j\neq i}^q x_i x_j\Big{)}+\varepsilon\notag\\
    &=\sum_{i=1}^q (\beta_0+\beta_i+\beta_{ii})x_i+\sum_{i=1}^q\beta_i x_i+\mathop{\sum^{q}\sum^{q}}_{i<j}\beta_{ij}x_i x_j-\sum_{i=1}^q \beta_{ii}x_i\sum_{j=1\atop j\neq i}^q x_j+\varepsilon\notag\\
    &=\sum_{i=1}^q \beta_i^{*}x_i+\mathop{\sum^{q}\sum^{q}}_{i<j}\beta_{ij}^{*}x_i x_j+\varepsilon,
\end{align}
where $\beta_0^{*}=\beta_0+\beta_i+\beta_{ii}$, $\beta_{ij}^{*}=\beta_{ij}-2\beta_{ii}$ for all $i,j=1,2,\ldots,q$.
Model (\ref{eq-7}) is also called the second--order Scheff\'{e} model.

\cite{Scheffe1963} proposed simplex-centroid design and its associated model. 
The simplex-centroid design consists of $2^q-1$ points, that is, the $q$ permutations of $(1,0,0,\ldots,0)$, 
the $C(q,2)$ permutations of $(\frac{1}{2},\frac{1}{2},0,\ldots,0)$, the $C(q,3)$ permutations 
of $(\frac{1}{3},\frac{1}{3},\frac{1}{3},0,\ldots,0)$,$\ldots$, with finally the overall centroid 
point $(\frac{1}{q},\frac{1}{q},\ldots,\frac{1}{q})$. 
The corresponding model to be fitted to data at the points of simplex-centroid design is 
\begin{equation*}
y=\sum_{i=1}^q \beta_i x_i+\mathop{\sum^q\sum^q}_{i<j} \beta_{ij}x_ix_j+\mathop{\sum^q\sum^q\sum^q}_{i<j<k} \beta_{ijk}x_ix_jx_k+\cdots+\beta_{123\ldots q}x_1x_2x_3\ldots x_q+\varepsilon. 
\end{equation*}   

\section{$K-$optimal designs for first-order and second-order Scheff\'{e} models}\label{Sec3}
We consider the common linear regression model 
\begin{equation}\label{eq-8}
y=\mathbf{f}^{\mathrm{T}}\mathbf{(x)}\mathbf{\theta}+\varepsilon(\mathbf{x}),
\end{equation}
where $\mathbf{x}=(x_1,x_2,\ldots,x_q)^{\mathrm{T}}$ is a $q-$dimensional vector of predictors which in the design 
space $\mathcal{X}\subset S^{q-1}$, $\mathbf{f(x)}=(\mathbf{f(x_1)},\mathbf{f(x_2)},\ldots,\mathbf{f(x_p)})^{\mathrm{T}}$ 
is $p-$dimensional vector known linearly independent regression functions, $\mathbf{\theta}=(\theta_1,\theta_2,\ldots,\theta_p)^{\mathrm{T}}\in \mathbb{R}^p$ denotes the vector of unknown parameters 
and $\varepsilon(\mathbf{x})$ is assumed to be independent of $\mathbf{x}$ and independent identically distributed 
with a normal distributed with mean 0 and constant variance $\sigma^2$.
We assume that the experimenter can take $n$ independent observations of the form $y_i=\mathbf{f(x_i)^{\mathrm{T}}\theta}+\varepsilon_i(i=1,\ldots,n)$ at experimental points $\mathbf{x_1},\ldots,\mathbf{x_n}$.

An approximate design $\xi$ is a probability distribution with finite support on the factor space $\mathcal{X}$ and is represented by 
\begin{align*}
\xi(\mathbf{w})=\left(
\begin{array}{cccc}
\mathbf{x_1}& \mathbf{x_2}& $\ldots$& \mathbf{x_n}\\
\omega_1           & \omega_2           & $\ldots$& \omega_n\\
\end{array}
\right),
\end{align*}
where $\mathbf{x_i}$ denotes a support point at which a measurement is taken, and the weight 
vector $\mathbf{w}=(\omega_1,\omega_2,\ldots,\omega_n)$ with the weights satisfying $\omega_i>0$ and $\sum_{i=1}^n \omega_i=1$.
The information matrix of a design $\xi$ for model (\ref{eq-8}) as
\begin{equation*}
\mathbf{M(w)}=\sum_{i=1}^n \omega_i\mathbf{f(x_i)}\mathbf{f^{\mathrm{T}}(x_i)}.
\end{equation*}
Matrix $\mathbf{M(w)}$ is $p\times p$, always symmetric and positive semi-definite. 
One can then define the condition number of the information matrix as 
\begin{equation}
\kappa(\mathbf{M(w)})=
\begin{cases}\label{eq-9}
\frac{\lambda_{\max}(\mathbf{M(w)})}{\lambda_{\min}(\mathbf{M(w)})}, &\mbox{if}\ \mathbf{M(w)}\ \mbox{is}\ \mbox{nonsingular},\\
\infty, &\mbox{if}\ \mathbf{M(w)}\ \mbox{is}\ \mbox{singular}. \\
\end{cases}
\end{equation}
where $\lambda_{\max}(\mathbf{M(w)})$ and $\lambda_{\min}(\mathbf{M(w)})$ are the largest and the smallest eigenvalues, respectively.

The following definition, due to \cite{YeZhou2013}, provides the $K-$optimal criterion for a design belonging to $\Xi$, where $\Xi$ denote the set of all designs with nonsingular information matrix on $\mathcal{X}$.
\begin{definition}\label{def1}
A design $\xi^{*}\in \Xi$ is called $K-$optimal design for the model (\ref{eq-8}) if it minimizes 
\begin{equation}\label{eq-10}
\phi(\mathbf{w})=\frac{\lambda_{\max}(\mathbf{M(w)})}{\lambda_{\min}(\mathbf{M(w)})}
\end{equation}
over $\Xi$.
\end{definition}

By Definition \ref{def1}, the $K-$optimal design problem is given by
\begin{equation}
\begin{cases}\label{eq-11}
\arg\underset{\mathbf{w}}{\min} \phi(\mathbf{w})\\
\mbox{subject}\ \mbox{to}:\mathbf{w}\ge 0,\sum_{i=1}^n \omega_i=1,\\
\end{cases}
\end{equation}
where $\mathbf{w}\ge 0$ means that each entry of $\mathbf{w}$ is non-negative. Then $\xi(\mathbf{w}^{*})$ is called a $K-$optimal design if $\mathbf{w}^{*}$ is a solution to (\ref{eq-11}).
To construct $K-$optimal designs for the first-order Scheff\'{e} model, we firstly give the following Lemma \ref{lem1}. 
\begin{lemma}\label{lem1}
In the first-order Scheff\'{e} polynomial with regression function
\begin{equation*}
\mathbf{f(x)}=(x_1,x_2,\ldots,x_q)^{\mathrm{T}}
\end{equation*}
the vertices of $(q-1)-$dimensional regular simplex, whose vertices lie on the design 
region $S^{q-1}$, constitute the support points of an optimum design on the regular simplex $S^{q-1}$.
The corresponding design $\xi(\mathbf{w})$ has the weight vector with equal 
components $\mathbf{w}=(\frac{1}{q},\frac{1}{q},\ldots,\frac{1}{q})$ as the optimal design.
\end{lemma}
\begin{proof}
Let $\xi$ be an arbitrary design with probability measure on the regular simplex $S^{q-1}$.
The proof for the case of $q=1$ is clear.
We consider the case of $q\ge 2$.
For all $i\in \{1,2,\ldots,q\}$, let $(X_1,\ldots,X_i,\ldots,X_q) \thicksim \xi$.
$\xi$ is invariant with respect to permutations of the components.
It follows $(X_1,\ldots,X_i,\ldots,X_q),(X_i,\ldots,X_1,\ldots,X_q)\thicksim \xi$.
So we have $\omega_1=\omega_i,i=2,\ldots,q$, $i.e.$, $\omega_1=\omega_2=\cdots=\omega_q$.
By $\mathbf{w}=(\omega_1,\ldots,\omega_q)\ge 0$ and $\sum_{i=1}^q \omega_i=1$, we have $\omega_1=\omega_2=\cdots=\omega_q=\frac{1}{q}$.
This completes the proof.
\end{proof}
\begin{theorem}\label{thm1}
The $K-$optimal design $\xi$ for the first-order Scheff\'{e} model (\ref{eq-5}) has 
the weight vector $\mathbf{w}^{*}=(\frac{1}{q},\frac{1}{q},\cdots,\frac{1}{q})$ on 
the vertices of the regular simplex $S^{q-1}$.    
\end{theorem}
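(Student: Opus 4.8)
The plan is to exploit the elementary but decisive fact that the condition number of any nonsingular information matrix is bounded below by $1$: since $\lambda_{\max}(\mathbf{M}) \ge \lambda_{\min}(\mathbf{M}) > 0$, we always have $\phi(\mathbf{w}) \ge 1$ over all of $\Xi$. Consequently, \emph{any} design whose information matrix is a scalar multiple of the identity attains this universal lower bound and is therefore automatically $K$-optimal. My strategy is thus simply to verify that the equal-weight vertices design singled out by Lemma \ref{lem1} produces exactly such an information matrix.

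First I would evaluate the regression vector $\mathbf{f(x)}=(x_1,\ldots,x_q)^{\mathrm{T}}$ at the vertices of $S^{q-1}$. The $i$th vertex is the standard basis vector $\mathbf{e}_i$, and since only its $i$th coordinate is nonzero, $\mathbf{f}(\mathbf{e}_i)=\mathbf{e}_i$. Taking the equal weights $\omega_i=1/q$ from Lemma \ref{lem1}, the information matrix collapses to $\mathbf{M}(\mathbf{w}^{*})=\sum_{i=1}^{q}\tfrac{1}{q}\mathbf{e}_i\mathbf{e}_i^{\mathrm{T}}=\tfrac{1}{q}\mathbf{I}_q$, a diagonal matrix whose $q$ eigenvalues all equal $1/q$. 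Hence $\lambda_{\max}=\lambda_{\min}=1/q$ and $\phi(\mathbf{w}^{*})=1$. Combined with the bound $\phi\ge 1$, this shows the design attains the global minimum and is $K$-optimal.

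If uniqueness is desired, I would note that $\phi(\mathbf{w})=1$ forces $\mathbf{M}(\mathbf{w})$ to be a scalar multiple of $\mathbf{I}_q$, so all its off-diagonal entries vanish; since those entries are nonnegative combinations of products $x_i x_j$ of nonnegative coordinates, each support point can have at most one nonzero coordinate, and the constraint $\sum_{i=1}^{q}x_i=1$ then pins the support onto the vertices, with equality of the diagonal entries yielding the weights $1/q$. This recovers exactly the design of the statement.

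Honestly, there is no substantial obstacle here: the result is nearly immediate once one recognizes that restricting the support to the vertices sends $\mathbf{f}$ onto the standard basis and diagonalizes $\mathbf{M}$. The only point requiring care is making the universal bound $\phi\ge 1$ explicit, which is precisely what upgrades the computation ``condition number equals $1$'' into a genuine proof of optimality; the symmetry reduction needed to isolate the candidate design has already been furnished by Lemma \ref{lem1}.
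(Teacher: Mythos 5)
Your proof is correct and follows essentially the same route as the paper: evaluate the information matrix of the equal-weight vertex design from Lemma \ref{lem1}, observe that it equals $\frac{1}{q}\mathbf{I}_q$ so the condition number is $1$, and conclude optimality from the universal bound $\phi(\mathbf{w})\ge 1$ (which the paper leaves implicit but you rightly make explicit). Your uniqueness argument is a correct small addition not present in the paper's proof.
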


\begin{proof}
By Lemma \ref{lem1}, the design $\xi$ has the weight vector $\mathbf{w}^{*}=(\frac{1}{q},\frac{1}{q},\ldots,\frac{1}{q})$ on the vertices of the regular simplex $S^{q-1}$.
The corresponding information matrix is the diagonal matrix
\begin{equation*}
\mathbf{M(w^{*})}=\mbox{diag}(\frac{1}{q},\ldots,\frac{1}{q}).
\end{equation*}
The eigenvalues of information matrix $\mathbf{M(w^{*})}$ are $\lambda_1=\lambda_2=\cdots=\lambda_q=\frac{1}{q}$.  
Then $\kappa(\mathbf{M(w^{*})})=1$.
By Definition \ref{def1}, the design $\xi$ is a $K-$optimal design.
\end{proof}

\begin{example}\label{exa1}
To illustrate the result given by Theorem \ref{thm1}, we consider a first-order Scheff\'{e} polynomial 
in four variables with regression function, that is $\mathbf{f(x)}=(x_1,x_2,x_3,x_4)^{\mathrm{T}}$, 
where $\mathbf{x}=(x_1,x_2,x_3,x_4)^{\mathrm{T}}\in S^{4-1}$.
The $K-$optimal design $\xi$ for the first-order Scheff\'{e} model (\ref{eq-5}) has the weight 
vector $\mathbf{w}^{*}=(\frac{1}{4},\frac{1}{4},\frac{1}{4},\frac{1}{4})$ on the vertices of tetrahedron, that is 
\begin{equation*}
\xi(\mathbf{w}^{*})=
\left(
\begin{array}{cccc}
(1,0,0,0)&(0,1,0,0)&(0,0,1,0)&(0,0,0,1)\\
\frac{1}{4}&\frac{1}{4}&\frac{1}{4}&\frac{1}{4}\\
\end{array}
\right).
\end{equation*}
\end{example}

The following Lemma \ref{lem2} plays a key role in the construction of $K-$optimal designs for second-order Scheff\'{e} model.

\begin{lemma}\label{lem2}
In the second-order Scheff\'{e} polynomial with regression function
\begin{equation*}
\mathbf{f(x)}=(x_1,x_2,\ldots,x_q,x_1 x_2,x_1 x_3,\dots,x_{q-1}x_q)^{\mathrm{T}}
\end{equation*}
the vertices and midpoints of the $C(q,2)$ edges of $(q-1)-$dimensional regular simplex, 
whose vertices and midpoints of the $C(q,2)$ edges lie on the design region $S^{q-1}$, constitute 
the support points of an optimum design on the regular simplex $S^{q-1}$. 
The corresponding design $\xi(\mathbf{w})$ has the weight vector $\mathbf{w}=\Big{(}\omega_1,\omega_2,\ldots,\omega_q,\omega_{q+1},\ldots,\omega_{\frac{q(q+1)}{2}}\Big{)}$ as 
the optimal design, where $\omega_1=\omega_2=\cdots=\omega_q,\omega_{q+1}=\omega_{q+2}=\cdots=\omega_{\frac{q(q+1)}{2}}$.
\end{lemma}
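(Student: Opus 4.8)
The plan is to extend the symmetrization idea behind Lemma~\ref{lem1} to the two orbits of support points that occur here. First I would make the group action explicit. Let the symmetric group $G=S_q$ act on $S^{q-1}$ by permuting the coordinates of $\mathbf{x}$. Since every coordinate of $\mathbf{f(x)}=(x_1,\ldots,x_q,x_1x_2,\ldots,x_{q-1}x_q)^{\mathrm{T}}$ is either a linear term $x_i$ or an unordered product $x_ix_j$, each $\sigma\in G$ merely permutes the entries of $\mathbf{f}$, so $\mathbf{f}(\sigma\mathbf{x})=P_\sigma\,\mathbf{f(x)}$ for some $p\times p$ permutation matrix $P_\sigma$, where $p=q(q+1)/2$ is the number of parameters. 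Pushing a design $\xi$ forward by $\sigma$ therefore yields $\mathbf{M}(\sigma\xi)=P_\sigma\mathbf{M}(\xi)P_\sigma^{\mathrm{T}}$; because $P_\sigma$ is orthogonal, this conjugation preserves the whole spectrum of the information matrix. Hence the criterion $\phi$ of Definition~\ref{def1} is $G$-invariant, i.e. $\phi(\sigma\xi)=\phi(\xi)$ for every $\sigma$.

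The central step is to show that averaging over $G$ does not increase $\phi$. For an arbitrary $\xi\in\Xi$ I would form the symmetrized design $\bar\xi=\frac{1}{q!}\sum_{\sigma\in G}\sigma\xi$, with $\mathbf{M}(\bar\xi)=\frac{1}{q!}\sum_{\sigma\in G}P_\sigma\mathbf{M}(\xi)P_\sigma^{\mathrm{T}}$. This is where the main obstacle appears: the condition number is a ratio of eigenvalues and is not convex in $\mathbf{M}$, so Jensen's inequality cannot be applied to $\phi$ as a whole. I would get around this by treating numerator and denominator separately. Writing $\lambda_{\max}(\mathbf{M})=\max_{\|v\|=1}v^{\mathrm{T}}\mathbf{M}v$ shows $\lambda_{\max}$ is convex, while $\lambda_{\min}(\mathbf{M})=\min_{\|v\|=1}v^{\mathrm{T}}\mathbf{M}v$ is concave; both are unchanged by the orthogonal conjugations $P_\sigma(\cdot)P_\sigma^{\mathrm{T}}$. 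Averaging then gives $\lambda_{\max}(\mathbf{M}(\bar\xi))\le\lambda_{\max}(\mathbf{M}(\xi))$ and $\lambda_{\min}(\mathbf{M}(\bar\xi))\ge\lambda_{\min}(\mathbf{M}(\xi))$, and dividing the first inequality by the second yields $\phi(\bar\xi)\le\phi(\xi)$. Consequently, in searching for the optimum one may restrict attention to $G$-invariant designs.

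It then remains to read off the weight pattern from invariance. A $G$-invariant design must assign equal mass to points lying in a common orbit, and on the candidate support the group acts with exactly two orbits: the $q$ vertices $(1,0,\ldots,0)$ form one orbit, and the $C(q,2)$ edge midpoints $(\tfrac12,\tfrac12,0,\ldots,0)$ form the other. Applying the orbit argument of Lemma~\ref{lem1} to each orbit separately forces $\omega_1=\cdots=\omega_q$ and $\omega_{q+1}=\cdots=\omega_{q(q+1)/2}$, which is precisely the asserted structure, leaving only the two common weights free; their optimal values are then pinned down in the theorem that follows. The two auxiliary facts I would still check are that these $p=q(q+1)/2$ lattice points give a nonsingular $\mathbf{M}$ (so that $\bar\xi\in\Xi$ and $\phi$ is finite), which follows from the block form of $\mathbf{M}$ on vertices and midpoints, and that no additional orbit of $S^{q-1}$ can lower $\phi$; the latter is most cleanly confirmed through the equivalence-type characterization used to prove optimality in the subsequent theorem.
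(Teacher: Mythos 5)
Your proof is correct and is in fact considerably more complete than the argument the paper itself gives. The paper's proof simply declares that ``$\xi$ is invariant with respect to permutations of the components'' for an arbitrary design and then reads off equal weights within each group, splitting artificially into the two degenerate cases $\omega_{q+1}=\cdots=\omega_{\frac{q(q+1)}{2}}=0$ (so as to invoke Lemma \ref{lem1}) and $\omega_1=\cdots=\omega_q=0$; nowhere does it justify why one may restrict attention to permutation-invariant designs under the $K$-criterion. That justification is exactly what you supply: the induced permutation representation $\mathbf{f}(\sigma\mathbf{x})=P_\sigma\mathbf{f}(\mathbf{x})$, the spectrum-preserving conjugation $\mathbf{M}(\sigma\xi)=P_\sigma\mathbf{M}(\xi)P_\sigma^{\mathrm{T}}$, and the convexity of $\lambda_{\max}$ together with the concavity of $\lambda_{\min}$, which yield $\lambda_{\max}(\mathbf{M}(\bar\xi))\le\lambda_{\max}(\mathbf{M}(\xi))$, $\lambda_{\min}(\mathbf{M}(\bar\xi))\ge\lambda_{\min}(\mathbf{M}(\xi))$ and hence $\phi(\bar\xi)\le\phi(\xi)$ for the symmetrized design. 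This is the standard and correct route for criteria that are orthogonally invariant and quasi-convex in the information matrix, and it buys a genuine reduction to invariant designs rather than an assumption of one; the orbit decomposition into the $q$ vertices and the $C(q,2)$ midpoints then forces the stated weight pattern. The one point you flag but do not close --- that the optimal design is supported only on these two orbits, i.e.\ that no further orbit of points in $S^{q-1}$ can lower $\phi$ --- is also left unproved in the paper (Theorem \ref{thm2} merely optimizes over the candidate support, with no equivalence-theorem verification), so your proposal is no weaker than the original on that score and is more explicit about where the remaining gap lies.
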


\begin{proof}
Let $\xi$ be an arbitrary design with probability measure on the design region $S^{q-1}$.
Let $\omega_{q+1}=\omega_{q+2}=\cdots=\omega_{\frac{q(q+1)}{2}}=0$, 
by Lemma \ref{lem1}, we have $\omega_1=\omega_2=\cdots=\omega_q$.
Let $\omega_1=\omega_2=\cdots=\omega_q=0$, we consider the case of $q\ge 2$.
For all $i\in\{1,2,\ldots,q\}$, let $(X_1,\ldots,X_i,\ldots,X_q) \thicksim \xi$.
$\xi$ is invariant with respect to permutations of the components.
It follows $(X_1,\ldots,X_i,\ldots,X_q),(X_i,\ldots,X_1,\ldots,X_q)\thicksim \xi$.
So we have $\omega_{q+1}=\omega_{q+2}=\cdots=\omega_{\frac{q(q+1)}{2}}$.
This completes the proof.
\end{proof}

\begin{theorem}\label{thm2}
The $K-$optimal design $\xi$ for the second-order Scheff\'{e} model (\ref{eq-7}) has 
the weight vector $\mathbf{w}^{*}=\Big{(}\underset{q}{\underbrace{\frac{8q-7}{q(16q-15)},\ldots,\frac{8q-7}{q(16q-15)}}},\ \underset{\frac{q(q-1)}{2}}{\underbrace{\frac{16}{q(16q-15)},\ldots,\frac{16}{q(16q-15)}}}\Big{)}$ on
the vertices and midpoints of the $C(q,2)$ edges of the regular simplex $S^{q-1}$.
\end{theorem}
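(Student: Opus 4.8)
The plan is to invoke Lemma~\ref{lem2} to collapse the design down to a single real parameter, then to diagonalize the resulting information matrix by exploiting its $S_q$-symmetry, and finally to solve a one-dimensional minimization. By Lemma~\ref{lem2} it suffices to consider designs supported on the $q$ vertices with common weight $a$ and on the $\binom{q}{2}$ edge--midpoints with common weight $b$, subject to $qa+\tfrac{q(q-1)}{2}b=1$. Evaluating $\mathbf{f}$ at a vertex $\mathbf{e}_k$ gives $(\mathbf{e}_k,\mathbf{0})$ and at a midpoint gives $(\tfrac12(\mathbf{e}_k+\mathbf{e}_l),\tfrac14\mathbf{u}_{kl})$, so I would write
\[
\mathbf{M}=\begin{pmatrix} A & B\\ B^{\mathrm T} & C\end{pmatrix},\qquad A=aI_q+\tfrac b4\big[(q-2)I_q+J_q\big],\quad B=\tfrac b8 N,\quad C=\tfrac b{16}I,
\]
where $N$ is the vertex--edge incidence matrix of the complete graph $K_q$ and $J_q$ is the all-ones matrix; the identity I will lean on throughout is $NN^{\mathrm T}=(q-2)I_q+J_q$.

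Since the support is invariant under the symmetric group $S_q$ acting on the components, $\mathbf{M}$ commutes with the induced permutation action and its spectrum splits accordingly. I would exhibit the invariant subspaces explicitly: the pair $(\mathbf{1}_q,\mathbf{0}),(\mathbf{0},\mathbf{1}_{\binom q2})$ spans a two-dimensional block $T$; for each sum-zero $\mathbf{v}\in\mathbb{R}^q$ the pair $(\mathbf{v},\mathbf{0}),(\mathbf{0},N^{\mathrm T}\mathbf{v})$ spans a copy of a second block $S$; and the kernel of $N$ inside the quadratic coordinates contributes the scalar $\tfrac b{16}$. Using $NN^{\mathrm T}=(q-2)I_q+J_q$ these reduce to
\[
T=\begin{pmatrix} a+\tfrac{b(q-1)}2 & \tfrac b8\sqrt{2(q-1)}\\[2pt] \tfrac b8\sqrt{2(q-1)} & \tfrac b{16}\end{pmatrix},\qquad S=\begin{pmatrix} a+\tfrac{b(q-2)}4 & \tfrac b8\sqrt{q-2}\\[2pt] \tfrac b8\sqrt{q-2} & \tfrac b{16}\end{pmatrix},
\]
and a dimension count ($2+2(q-1)+\tfrac{q(q-3)}2=\tfrac{q(q+1)}2$) confirms these account for all eigenvalues.

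The conceptual crux is to show that \emph{both} extreme eigenvalues of $\mathbf{M}$ come from the block $T$. A direct computation gives $\det T=\det S$ (in fact independently of the constraint, since the differences in diagonal and off-diagonal contributions cancel), while $\operatorname{tr}T-\operatorname{tr}S=\tfrac{bq}4>0$. Two positive-definite $2\times2$ matrices with equal determinant and $\operatorname{tr}T>\operatorname{tr}S$ satisfy $\lambda_{-}(T)<\lambda_{-}(S)$ and $\lambda_{+}(T)>\lambda_{+}(S)$; and since $\tfrac b{16}$ is a diagonal entry of $S$, it lies between $\lambda_{\pm}(S)$. Hence
\[
\lambda_{-}(T)<\lambda_{-}(S)\le \tfrac b{16}\le \lambda_{+}(S)<\lambda_{+}(T),
\]
so $\lambda_{\min}(\mathbf{M})=\lambda_{-}(T)$, $\lambda_{\max}(\mathbf{M})=\lambda_{+}(T)$, and therefore $\kappa(\mathbf{M})=\kappa(T)$.

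Finally I would use the normalization: $qa+\tfrac{q(q-1)}2 b=1$ is precisely $q\big(a+\tfrac{b(q-1)}2\big)=1$, so the $(1,1)$ entry of $T$ is pinned to $\tfrac1q$ and $\kappa(T)$ becomes a function of $b$ alone on $0\le b\le\tfrac{2}{q(q-1)}$. Because the condition number of a $2\times2$ positive-definite matrix is a decreasing function of $\det/(\operatorname{tr})^2$, minimizing $\kappa(T)$ is equivalent to maximizing $\det T/(\operatorname{tr}T)^2$; setting the derivative to zero yields the unique interior root $b^{*}=\tfrac{16}{q(16q-15)}$, and the constraint returns $a^{*}=\tfrac{8q-7}{q(16q-15)}$, matching the stated weights. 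Both endpoints force $\det T=0$ (singular $\mathbf{M}$, $\kappa=\infty$), confirming the interior critical point is the minimizer. I expect the main obstacle to be the spectral reduction together with the determinant/trace comparison: once $\kappa(\mathbf{M})$ is identified with $\kappa(T)$ at a fixed $(1,1)$ entry, the remaining optimization is routine single-variable calculus. The degenerate case $q=2$, where the quadratic coordinates carry no sum-zero part so that $S$ collapses to a scalar, should be verified directly.
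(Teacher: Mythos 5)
Your plan is correct and follows the same overall route as the paper: reduce to two weights via Lemma~\ref{lem2}, form the block information matrix (your $A$, $B$, $C$ coincide exactly with the paper's blocks, with your incidence matrix $N$ playing the role of $\mathbf{M}_2$), identify the two extreme eigenvalues, and minimize the resulting one-parameter condition number under $qa+\tfrac{q(q-1)}{2}b=1$. The genuine difference is that you prove the step the paper merely asserts: the paper writes down $\lambda_{\max}$ and $\lambda_{\min}$ ``by solving $|\lambda\mathbf{I}-\mathbf{M(w)}|=0$'' without explaining why those two of the $\tfrac{q(q+1)}{2}$ eigenvalues are the extremal ones. Your $S_q$-isotypic decomposition into the blocks $T$ and $S$ plus the residual eigenvalue $\tfrac{b}{16}$, combined with $\det T=\det S=\tfrac{ab}{16}$, $\operatorname{tr}T-\operatorname{tr}S=\tfrac{bq}{4}>0$, and the interlacing of the diagonal entry $\tfrac{b}{16}$ within $\lambda_{\pm}(S)$, cleanly forces $\lambda_{\min}(\mathbf{M})=\lambda_{-}(T)$ and $\lambda_{\max}(\mathbf{M})=\lambda_{+}(T)$; and once the constraint pins $T_{11}=1/q$, the eigenvalues $\lambda_{\pm}(T)$ reproduce the paper's displayed formulas verbatim. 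The concluding single-variable optimization (maximizing $\det T/(\operatorname{tr}T)^2$) yields the same $b^{*}=\tfrac{16}{q(16q-15)}$ and $a^{*}=\tfrac{8q-7}{q(16q-15)}$, with the endpoint check ruling out boundary minimizers. In short, your argument is a more complete version of the paper's proof; the only points to tidy up in a full write-up are the $q=2$ degeneracy you already flag and the transpose conventions for $N$.
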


\begin{proof}
By Lemma \ref{lem2}, we suppose that the weight vector $\mathbf{w}$ for any design $\xi$ on 
the vertices and midpoints of the $C(q,2)$ edges of the regular simplex $S^{q-1}$ is
$\mathbf{w}=(\underset{q}{\underbrace{r_1,\ldots,r_1}},\ \underset{\frac{q(q-1)}{2}}{\underbrace{r_2,\ldots,r_2}})$.
The corresponding information matrix is 
\begin{equation*}
\mathbf{M(w)}=
\left[
\begin{array}{ccc}
(r_1+\frac{q-2}{4}r_2)\mathbf{I}_q+\frac{r_2}{4}\mathbf{J}_q&\frac{r_2}{8}\mathbf{M}_2^{\mathbf{T}} \\
\frac{r_2}{8}\mathbf{M}_2&\frac{r_2}{16}\mathbf{I}_{\frac{q(q-1)}{2}} \\
\end{array}
\right],
\end{equation*}
where $\mathbf{I}_q$ and $\mathbf{J}_q$ are $q\times q$ identity matrix and $q\times q$ matrix with 
all elements equal to 1.
$\mathbf{M}_2$ is a $C(q,2)\times q$ matrix, such that the first $i$ elements in the first row 
of $\mathbf{M}_i$ are 1 and the remaining elements in the first row are 0 and the remaining 
$C(q,2)-1$ rows of $\mathbf{M}_2$ are the different permutations of the first row according to 
lexicographical order.
For example, when $q=4$, $\mathbf{M}_2$ is a $6\times 4$ matrix and its 1st, 2nd,$\ldots$,6th rows 
are $(1,1,0,0),(1,0,1,0),(1,0,0,1),(0,1,1,0),(0,1,0,1),(0,0,1,1)$, respectively.
By solving $\Big{|}\lambda\mathbf{I}_{\frac{q(q+1)}{2}}-\mathbf{M(w)}\Big{|}=0$, the largest and 
smallest eigenvalues of information matrix $\mathbf{M(w)}$ are
\begin{align*}
\lambda_{\max}(\mathbf{M})&=\frac{\sqrt{\frac{q^2(32q-31)}{4}r_2^2-8qr_2+64}+\frac{q}{2}r_2+8}{16q}, \\
\lambda_{\min}(\mathbf{M})&=\frac{-\sqrt{\frac{q^2(32q-31)}{4}r_2^2-8qr_2+64}+\frac{q}{2}r_2+8}{16q}. 
\end{align*} 
Then 
\begin{equation*}
\kappa(\mathbf{M})=\frac{\sqrt{\frac{q^2(32q-31)}{4}r_2^2-8qr_2+64}+\frac{q}{2}r_2+8}{-\sqrt{\frac{q^2(32q-31)}{4}r_2^2-8qr_2+64}+\frac{q}{2}r_2+8}.
\end{equation*}
By solving the problem (\ref{eq-11}), i.e.,
\begin{equation}
\begin{cases}\nonumber
\arg\underset{\mathbf{w}}{\min} \kappa(\mathbf{M})=\frac{\sqrt{\frac{q^2(32q-31)}{4}r_2^2-8qr_2+64}+\frac{q}{2}r_2+8}{-\sqrt{\frac{q^2(32q-31)}{4}r_2^2-8qr_2+64}+\frac{q}{2}r_2+8},\\
\mbox{subject}\ \mbox{to}:\mathbf{w}\ge 0, qr_1+\frac{q(q-1)}{2}r_2=1.\\
\end{cases}
\end{equation}
we can get the probability measure $r_1$ and $r_2$ as
\begin{equation}
\begin{cases}\nonumber
r_1=\frac{8q-7}{q(16q-15)},\\
r_2=\frac{16}{q(16q-15)}.
\end{cases}
\end{equation}
Hence the $K-$optimal design $\xi$ for the second-order Scheff\'{e} model (\ref{eq-7}) has 
the weight vector $\mathbf{w}^{*}=\Big{(}\underset{q}{\underbrace{\frac{8q-7}{q(16q-15)},\ldots,\frac{8q-7}{q(16q-15)}}},\ \underset{\frac{q(q-1)}{2}}{\underbrace{\frac{16}{q(16q-15)},\ldots,\frac{16}{q(16q-15)}}}\Big{)}$ on
the vertices and midpoints of the $C(q,2)$ edges of the regular simplex $S^{q-1}$.
This completes the proof.
\end{proof}

Table \ref{Table1} lists the numberical results of $K-$optimal designs for second-order Scheff\'{e} model with different $q$.
From Table \ref{Table1}, we observe that $n_1r_1$ and $n_2r_2$ are strictly decreasing and increasing functions, respectively and getting arbitrarily close to 0.5 as $q$ gets large.  
This is because $n_1r_1=\frac{8q-7}{16q-15}$ and $n_2r_2=\frac{8(q-1)}{(16q-15)}$ are strictly decreasing and increasing functions, respectively.
Moreover, we also have 
\begin{equation*}
\lim_{q\to \infty} \frac{8q-7}{16q-15}=\frac{1}{2},\lim_{q\to \infty} \frac{8(q-1)}{(16q-15)}=\frac{1}{2}.   
\end{equation*}
It implies that in $K-$optimal second-order Scheff\'{e} designs, 
vertices and midpoints of the $C(q,2)$ edges on the regular simplex $S^{q-1}$ are support points,
the total weight $n_1 r_1$ and $n_2 r_2$ of vertices and midpoints of the $C(q,2)$ edges are getting arbitrarily close to 0.5 as $q$ increases.
\begin{center}
\begin{table}[htbp]
{\caption{$K-$optimal designs for second-order Scheff\'{e} model with different $q$.\label{Table1}}}
\renewcommand{\arraystretch}{1} \tabcolsep 6.5pt
{\begin{tabular}{lllllllllllll}
    \toprule
    $q$       & 3       & 4      & 5      & 6      & 7      & 8      & 9      & 10     & $\cdots$ & $\to\infty$\\
    \midrule
    $r_1$     & 0.17171 & 0.12755& 0.10153& 0.08436& 0.07216& 0.06305& 0.05598& 0.05034& $\cdots$ & 0  \\
    $n_1$     & 3       & 4      & 5      & 6      & 7      & 8      & 9      & 10     & $\cdots$ & $\infty$\\
    $n_1 r_1$ & 0.51513 & 0.5102 & 0.50765& 0.50616& 0.50512& 0.50440& 0.50382& 0.50340& $\cdots$ & 0.5\\
    \midrule   
    $r_2$     & 0.16161 & 0.08163& 0.04923& 0.03292& 0.02356& 0.01769& 0.01378 & 0.01103& $\cdots$ & 0 \\
    $n_2$     & 3       & 6      & 10     & 15     & 21     & 28     & 36      & 45     & $\cdots$ & $\infty$\\
    $n_2 r_2$ & 0.48483 & 0.48978& 0.49230& 0.49380& 0.49476& 0.49532& 0.49608 & 0.49635& $\cdots$ & 0.5\\
    \midrule    
    $n_1+n_2$ & 6       & 10     & 15     & 21     & 28     & 36     & 45      & 55     & $\cdots$ & $\infty$\\
    \bottomrule
\multicolumn{11}{l}{$n_1$ and $n_2$ denote the number of vertices and midpoints of the $C(q,2)$ edges on the regular}\\
\multicolumn{11}{l}{simplex $S^{q-1}$, respectively.}
\end{tabular}}
\end{table}
\end{center}

\begin{example}\label{exa2}
Construct $K-$optimal designs on the vertices and midpoints of three edges of an equilateral 
triangle for second-order Scheff\'{e} polynomial in three variables:
\begin{equation*}
y_{ij}=\sum_{i=1}^3 \beta_i x_i+\mathop{\sum^3\sum^3}_{i<j}\beta_{ij}x_i x_j+\varepsilon_{ij},i,j=1,2,3,
\end{equation*}
where $y_{ij}$ is the $jth$ observation from individual $i$, $\varepsilon_{ij}$ is the random errors.
Regression function $\mathbf{f(x)}=(x_1,x_2,x_3,x_1 x_2,x_1 x_3,x_2 x_3)^{\mathrm{T}}$, $\mathbf{x}=(x_1,x_2,x_3)^{\mathrm{T}}\in S^{3-1}$.
According to Theorem \ref{thm2}, the $K-$optimal design $\xi$ for the second-order Scheff\'{e} 
model (\ref{eq-5}) has the weight vector $\mathbf{w}^{*}=(\frac{17}{99},\frac{17}{99},\frac{17}{99},\frac{16}{99},\frac{16}{99},\frac{16}{99})$ on 
the vertices and midpoints of three edges of an equilateral triangle, that is 
\begin{equation*}
\xi(\mathbf{w}^{*})=
\left(
\begin{array}{cccccc}
(1,0,0)&(0,1,0)&(0,0,1)&(\frac{1}{2},\frac{1}{2},0)&(\frac{1}{2},0,\frac{1}{2})&(0,\frac{1}{2},\frac{1}{2})\\
\frac{17}{99}&\frac{17}{99}&\frac{17}{99}&\frac{16}{99}&\frac{16}{99}&\frac{16}{99}\\
\end{array}
\right).
\end{equation*}
\end{example}

\section{A mixture experiment in formulating color photograhic dispersion}\label{Sec4}
In formulation experiments and blending experiments, each component in the mixture can be varied within 
lower and upper constraints. When each component in the mixture is restricted by lower 
bounds $0\le L_i\le x_i,i=1,2,\ldots,q$ or upper bounds $0\le x_i\le U_i,i=1,2,\ldots,q$. 
Transforming the restricted components to pseudo-components 
can be used, that is, using
\begin{equation*}
x_i^{\prime}=\frac{x_i-L_i}{1-\sum_{i=1}^q L_i},i=1,2,\ldots,q
\end{equation*}
or 
\begin{equation*}
x_i^{*}=\frac{U_i-x_i}{\sum_{i=1}^q U_i-1},i=1,2,\ldots,q.
\end{equation*}
The quantity $1-\sum_{i=1}^q L_i$ and $\sum_{i=1}^q U_i-1$ must be greater than zero so that the simplex in the $x_i^{\prime}$ and $x_i^{*}$ lies 
entirely inside the simplex in the $x_i$.
The pseudo-component space by transforming the restricted components to pseudo-components makes it easier to see details of the response surface in the restricted region.
For example, the package \textbf{mixexp} of $\mathbf{R}$ creates two contour plots of a quadratic model fit to the data of \cite{Cornell2002}'s Table 4.1.
The contour plot is made over the actual component space and pseudo-component space as shown in Figure \ref{Fig2}.
\begin{figure}[htbp]
    \begin{minipage}{0.5\linewidth}
    \centering
    \includegraphics[width=1\linewidth]{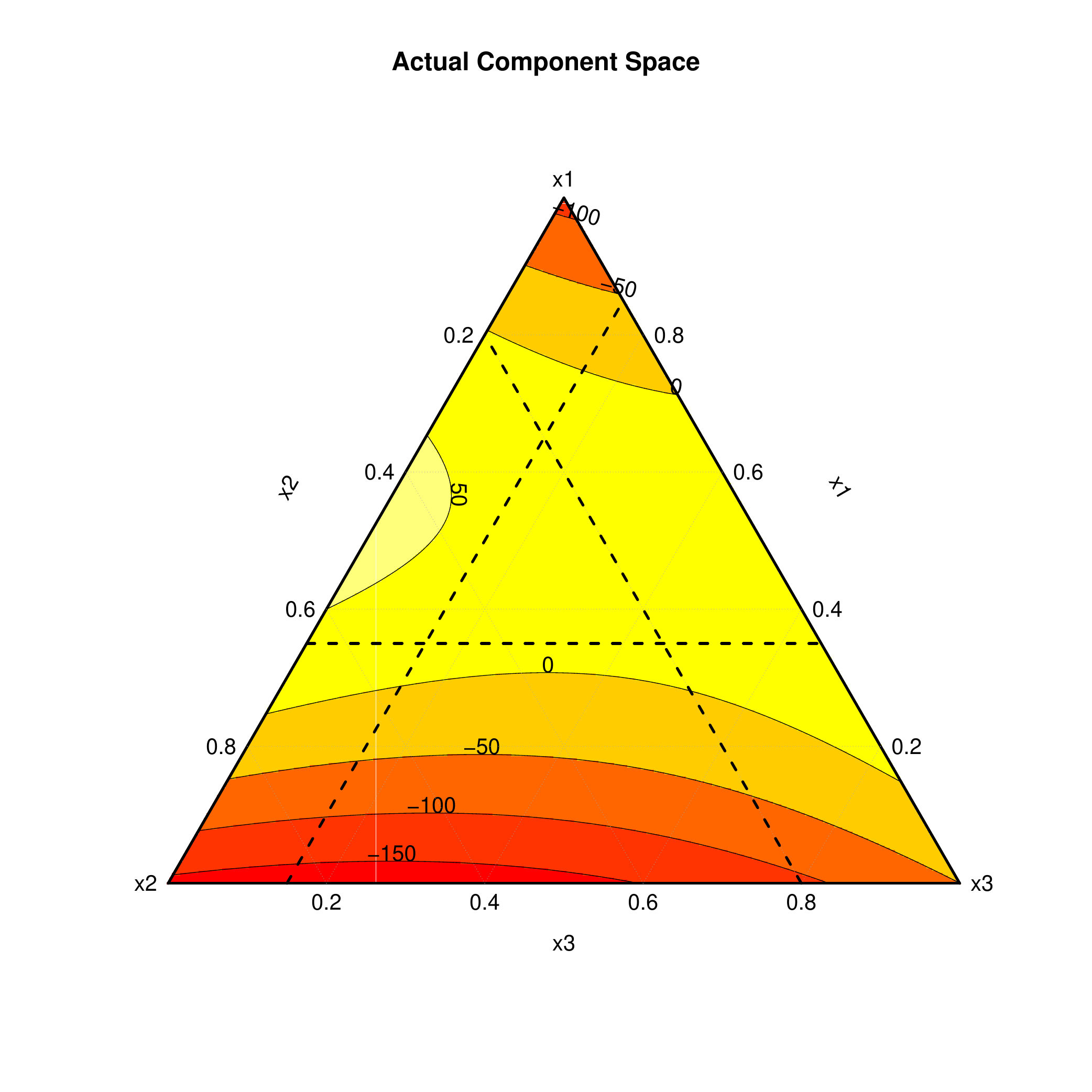}
    \end{minipage}
    \begin{minipage}{0.5\linewidth}
    \centering
    \includegraphics[width=1\linewidth]{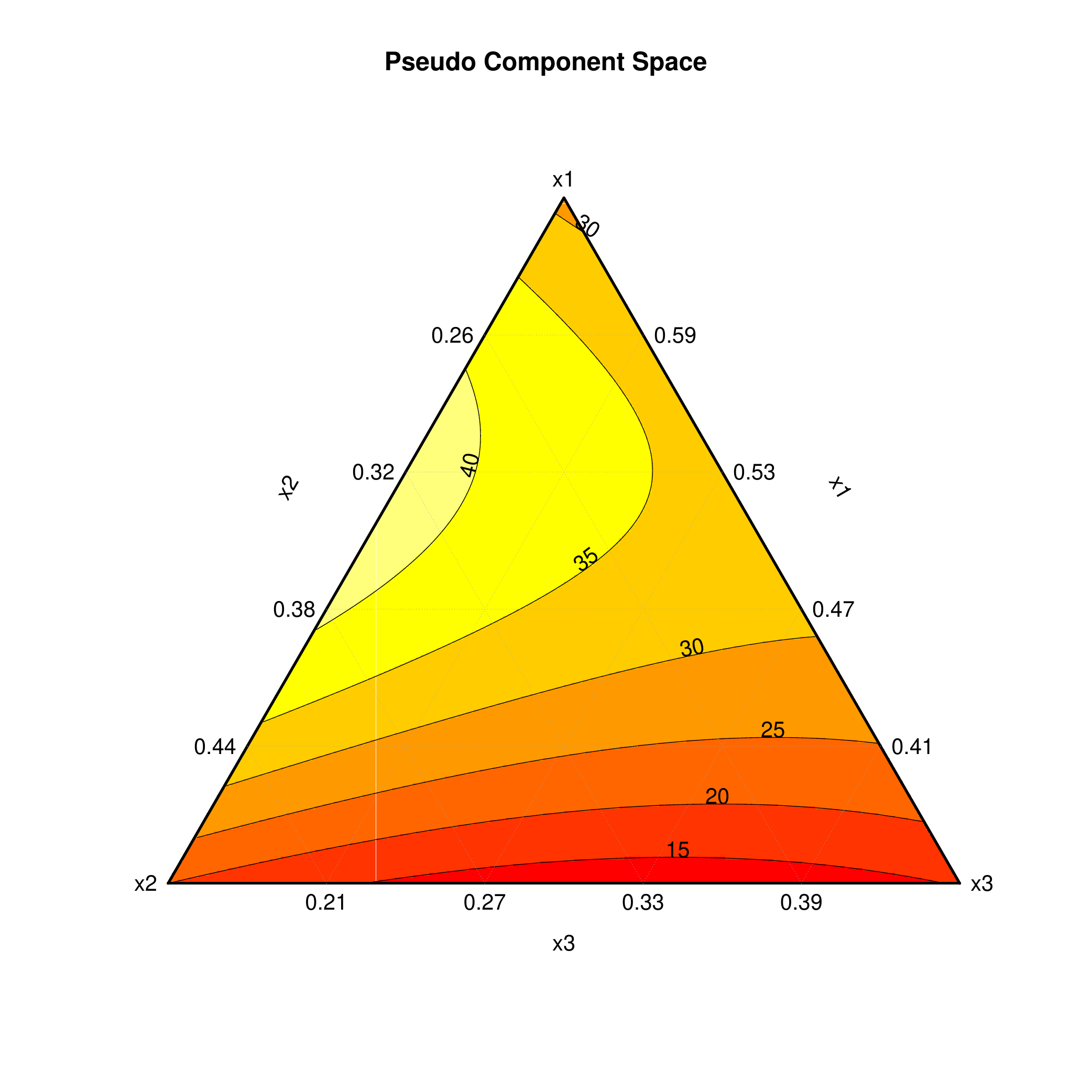}
    \end{minipage}
    \caption{Comparison of contour plots on unrestricted and pseudo-component space.}\label{Fig2}
\end{figure}

As can be seen, there is much more detail in the restricted space on the right.
The following example shows how to construct $K-$optimal designs for second-order Scheff\'{e} model 
with each component is restricted by lower and upper constraints. 

\begin{example}\label{exa3}
\cite{Smith2005} discussed a mixture experiment in formulating color photograhic dispersion that included 
constraints with upper and lower bounds on the components. The constraint on the design region are given 
below. 
\begin{align*}
0.08\le x_1\le 0.43\\
0\le x_2\le 0.35,\\
0.15\le x_3\le 0.5,
\end{align*}
where $x_1$=coupler, $x_2$=solvent, $x_3$=stabelizer.
The design region and support points are shown in Figure \ref{Fig3}.
\begin{figure}[H]
    \centering
    \begin{tabular}{c}
    \includegraphics[width=0.5\columnwidth]{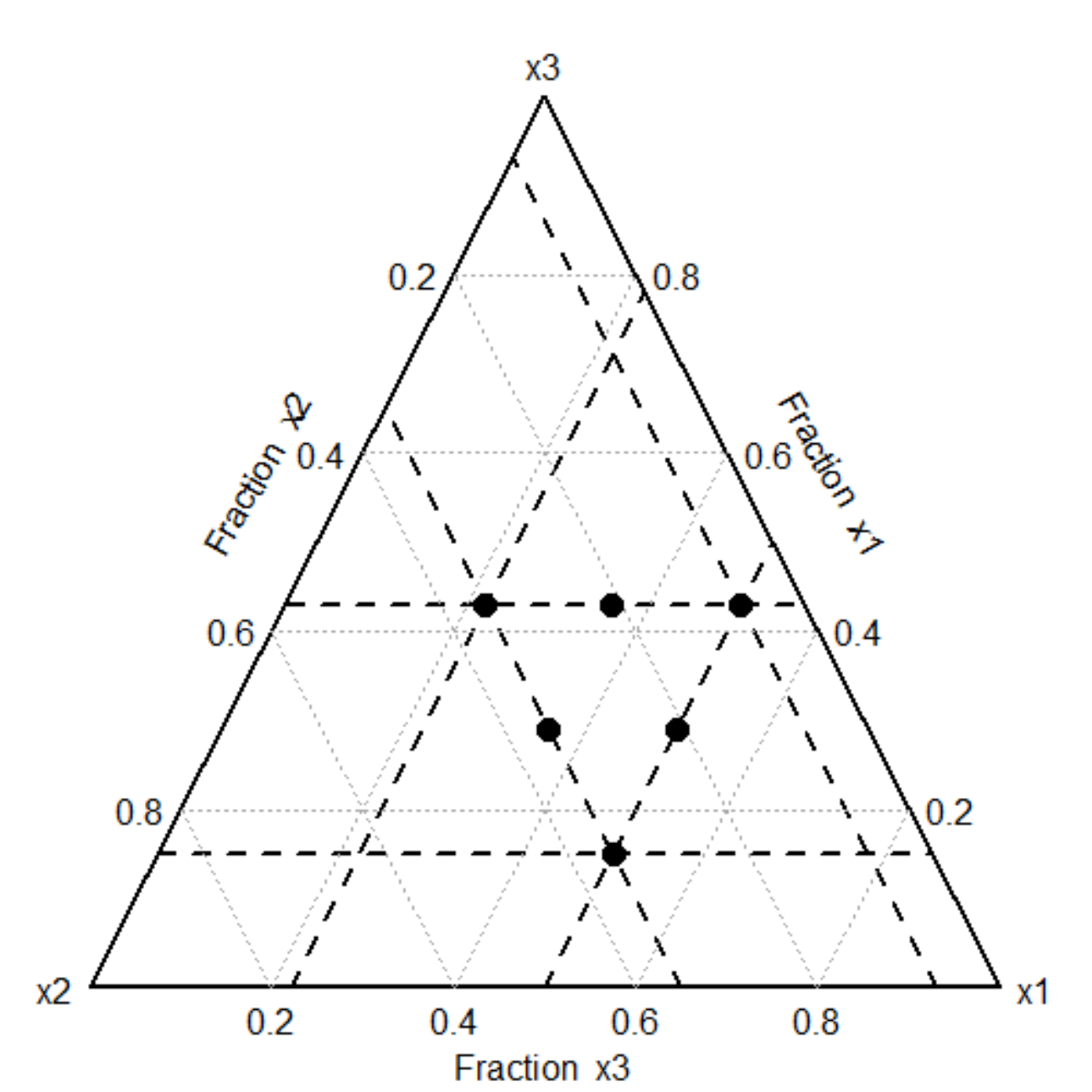}
    \end{tabular}
    \caption{The design region and support points in formulating color photograhic dispersion experiment.}\label{Fig3}
    \end{figure}
\noindent The three components in such experiment are restricted by specific lower and upper constraints.
To construct $K-$optimal design for second-order Scheff\'{e} model in such experiment, let us consider the transformation which 
transforms the restricted components to pseudo-components. The settings in the original $x_i$ components, 
corresponding to the lattice settings in the $x_i^{\prime}$, are presented in Table \ref{Table2}.
\begin{center}
\begin{table}[htbp]
{\caption{Original component settings and pseudo-component settings.\label{Table2}}}
\renewcommand{\arraystretch}{1} \tabcolsep 21pt
{\begin{tabular}{*{6}{c}}
    \toprule
    \multicolumn{3}{c}{original component settings} & \multicolumn{3}{c}{pseudo-component settings} \\
    \cmidrule(rrr){1-3}\cmidrule(rrr){4-6}
    $x_1$ & $x_2$ & $x_3$ & $x_1^{\prime}$& $x_2^{\prime}$& $x_3^{\prime}$\\
    \midrule
    0.4300001 & 0.3500002 & 0.2200000 & 1             & 0             & 0  \\
    0.4300000 & 0.0699999 & 0.5000000 & 0             & 1             & 0  \\
    0.1500001 & 0.3499999 & 0.5000000 & 0             & 0             & 1 \\
    0.4300001 & 0.2100001 & 0.3600000 & 1/2           & 1/2           & 0 \\
    0.2900001 & 0.3500002 & 0.3600000 & 1/2           & 0             & 1/2\\
    0.2900001 & 0.2099999 & 0.5000000 & 0             & 1/2           & 1/2\\
    \bottomrule
  \end{tabular}}
\end{table}
\end{center}

We now construct $K-$optimal design for second-order Scheff\'{e} model in pseudo-component space.
The $K-$optimal design problem is given by
\begin{equation*}
\begin{cases}\nonumber
\arg\underset{\mathbf{w}}{\min} \kappa(\mathbf{M})=\frac{\sqrt{\frac{2313}{4}r_2^2-24r_2+64}+\frac{3}{2}r_2+8}{-\sqrt{\frac{2313}{4}r_2^2-24r_2+64}+\frac{3}{2}r_2+8},\\
\mbox{subject}\ \mbox{to}:r_1,r_2\ge 0, 3(r_1+r_2)=1.\\
\end{cases}
\end{equation*}

By solving the above problem, we can get the probability measure $r_1$ and $r_2$ as
\begin{equation}
\begin{cases}\nonumber
r_1=\frac{17}{99},\\
r_2=\frac{16}{99}.
\end{cases}
\end{equation}
The $K-$optimal design $\xi^{\prime}(\mathbf{w}^{*})$ for second-order Scheff\'{e} model in pseudo-component space is given by
\begin{equation*}
\xi^{\prime}(\mathbf{w}^{*})=
\left\{
\begin{array}{cccccc}
(1,0,0)&(0,1,0)&(0,0,1)&(\frac{1}{2},\frac{1}{2},0)&(\frac{1}{2},0,\frac{1}{2})&(0,\frac{1}{2},\frac{1}{2})\\
\frac{17}{99}&\frac{17}{99}&\frac{17}{99}&\frac{16}{99}&\frac{16}{99}&\frac{16}{99}\\
\end{array}
\right\}.
\end{equation*}
Thus, the $K-$optimal design $\xi^{*}$ for second-order Scheff\'{e} design in original component space is given by
\begin{equation*}
\xi^{*}(\mathbf{w}^{*})=
\left\{
\begin{array}{cccccc}
(0.43,0.35,0.22)&(0.43,0.07,0.50)&(0.15,0.35,0.50)\\
\frac{17}{99}&\frac{17}{99}&\frac{17}{99}\\
(0.43,0.21,0.36)&(0.29,0.35,0.36)&(0.29,0.21,0.50)\\
\frac{16}{99}&\frac{16}{99}&\frac{16}{99}\\
\end{array}
\right\}.
\end{equation*}
\end{example} 

\section{Concluding remarks}\label{Sec5}
\cite{Yueetal2022} points that it can be hard to find $K-$optimal designs analytically 
for second-order response models. 
The paper investigated $K-$optimal designs for first-order and second-order Scheff\'{e} models.
The analytical solutions for first-order and second-order Scheff\'{e} models are obtained. 
An illustrative example is shown for constructing $K-$optimal design for second-order Scheff\'{e} model 
with each component is restricted by lower and upper constraints.

To compare the efficiency of designs, let $\xi_{D}^{*}$, $\xi_{K}^{*}$ denote the $D-$optimal 
and $K-$optimal designs for model (\ref{eq-7}), the $D-$efficiency of the design $\xi_{K}^{*}$ with 
information matrix $\mathbf{M}(\xi_{K}^{*})$ relative to the design $\xi_{D}^{*}$ with 
information matrix $\mathbf{M}(\xi_{D}^{*})$ is defined as  
\begin{equation*}
Eff_D(\xi_K^{*})=\Big{(}\frac{det\{\mathbf{M}(\xi_K^{*})\}}{det\{\mathbf{M}(\xi_D^{*})\}}\Big{)}^{\frac{1}{p}},
\end{equation*}
where $p$ is the number of unknown parameters in the model (\ref{eq-7}). The $K-$efficiency of the $D-$optimal 
design $\xi_D^{*}$ with $\kappa(\xi_D^{*})$ relative to the design $\xi_K^{*}$ with $\kappa(\xi_K^{*})$ is defined by
\begin{equation*}
Eff_K(\xi_D^{*})=\Big{(}\frac{\kappa\{\mathbf{M}(\xi_K^{*})\}}{\kappa\{\mathbf{M}(\xi_D^{*})\}}\Big{)}^{\frac{1}{p}},
\end{equation*}
where $\kappa(\mathbf{M})$ denotes the condition number of matrix $\mathbf{M}$.
We find that $Eff_D(\xi_K^{*})=99.95\%$ for $q=3$, and $Eff_K(\xi_D^{*})=99.98\%$ for $q=3$.
Our designs are thus shown to be more efficient.

An interesting research problem is to investigate $K-$optimal designs for other 
mixture models, such as Becker model, Draper-John's model and so on. In addition, algorithm 
to search $K-$optimal designs 
for mixture models is also interesting and challenging research problem.
We look forward to exploring these two problems in future research.

\bibliographystyle{unsrtnat}


\end{document}